\newtheorem{thm}{Theorem}[section]
\newtheorem{cor}[thm]{Corollary}
\theoremstyle{definition}
\newtheorem{defn}[thm]{Definition}
\theoremstyle{remark}
\newtheorem{exmpl}[thm]{Example}
\newcommand{\norm}[1]{\left\Vert#1\right\Vert}
\newcommand{\abs}[1]{\left\vert#1\right\vert}
\newcommand{\R}{\mathbb R}
\newcommand{\To}{\rightarrow}
\DeclareMathOperator{\Span}{span}
\DeclareMathOperator{\Cone}{cone}
\DeclareMathOperator{\qint}{qint}
\newcommand\blfootnote[1]{%
  \begingroup
  \renewcommand\thefootnote{}\footnote{#1}%
  \addtocounter{footnote}{-1}%
  \endgroup
}
\title{Proper Scoring Rules and Bregman Divergences}
\author{Evgeni Y. Ovcharov
\thanks{The author has been supported by the European Union Seventh Framework Programme under grant agreement no. 290976}
\\
\small Heidelberg Institute for Theoretical Studies\\
\small  Schloss-Wolfsbrunnenweg 35, D-69118 Heidelberg, Germany\\
\small \texttt{trulr6@yahoo.com}
}
\date{\today}
\begin{document}
\maketitle

\pagestyle{myheadings}
\markboth{Ovcharov}{Proper Scoring Rules and Bregman Divergences}

\begin{abstract}%
We revisit the mathematical foundations of proper scoring rules (PSRs) and Bregman divergences and present their characteristic properties in a unified theoretical framework. In many situations it is preferable not to generate a PSR directly from its convex entropy on the unit simplex but instead by the sublinear extension of the entropy to the positive orthant. This gives the scoring rule simply as a subgradient of the extended entropy, allowing for a more elegant theory. The other convex extensions of the entropy generate affine extensions of the scoring rule and induce the class of functional Bregman divergences. We discuss the geometric nature of the relationship between PSRs and Bregman divergences and extend and unify existing partial results. We also approach the topic of differentiability of entropy functions. Not all entropies of interest possess functional derivatives, but they do all have directional derivatives in almost every direction. Relying on the notion of quasi-interior of a convex set to quantify the latter property, we formalise under what conditions a PSR may be considered to be uniquely determined from its entropy.
\end{abstract}

{\bf Keywords:} proper scoring rule, entropy, Bregman divergence, quasi-interior, extension, characterisation, derivative, subgradient, convex, sublinear, homogeneous

\blfootnote{AMS 2000 subject classifications: Primary 62C99; Secondary 62A99, 26B25}
%Primary: Decision Theory, none of the above; Secondary: Foundational and philosophical topics, none of the above; Convexity, generalisations

\section{Introduction}

Proper scoring rules (PSRs) originated in probabilistic forecasting as devices that assess the quality of forecasts and elicit private information. The subject enjoys a considerable applied and theoretical interest in recent years \parencite{GK}. The present paper focuses on mathematical and geometric aspects of PSRs and Bregman divergences and elucidates the relationship between them. Having evolved to a large degree separately, the two notions have been investigated under restrictive and specialised conditions. We survey the available literature on this topic and systematise the relevant results by presenting them in a general and unified theoretical framework. A more detailed discussion on individual aspects of our review is given in the subsection below.

First, let us outline how the rest of the paper is organised. In Section \ref{sect: char}, we discuss the characterisation of PSRs and the related canonical extension of the entropy as a sublinear function to the positive orthant. In Section \ref{sect: conv}, we explore more general convex extensions of the entropy function to and beyond the positive orthant. This construction generates affine extensions of PSRs, also known as \emph{affine scoring rules}, and induces the class of functional Bregman divergences. In Section \ref{sect: Bregman}, we examine and generalise some technical results about Bregman divergences under regularity conditions that are natural for PSRs. We investigate the differentiability properties of entropy functions in Section \ref{sect: diff}. Here, we describe the collection of all PSRs generated by a given entropy function and formalise under what conditions this collection contains a unique element. In the short Appendix,  we present the proof of a technical result.

\subsection{Motivation and relation to literature}

The characterisation of PSRs through the 1-homogeneous extension of the entropy to the positive orthant was first developed by \cite{McC, HB}. A simpler characterisation of PSRs is due to  \cite{Sav, GR}, who consider entropy functions on the unit simplex. The unit simplex is, however, a negligible set in measure and topology, which obfuscates questions pertaining to regularity and uniqueness of subgradients, differentiability of entropy functions, etc. On the other hand, any proper scoring rule on the unit simplex is simply a subgradient relative to the positive orthant of the 1-homogeneous extension of the entropy. This fact provides us with means not only to study regularity of entropy functions, but also it offers a precise geometric interpretation for the condition of propriety of a scoring rule. The extension is implicit in the context of scoring rules that are 0-homogeneous in form such as the \emph{proper local scoring rules of higher orders} \parencite{DLP,PDL}. A prominent member of that class is the \emph{Hyv\"arinen scoring rule}, which supplies an attractive and statistically consistent alternative for the method of \emph{pseudolikelihood} \parencite{DawMus1, DawMus, Hyv, Hyv1, ForbLau}. The \emph{pseudospherical scoring rules} \parencite{GR, D} are another important family of 0-homogeneous scoring rules.
%In Section \ref{sect: char}, we compare the relative merits of the two approaches.

Confining attention only to sublinear extensions of the entropy instead of the more general convex extensions is too limiting. For example, the simplest convex extension of the \emph{power entropy}, corresponding to the \emph{power scoring rules}, is the power function. This family is very popular in the meteorological literature, mainly in terms of the \emph{quadratic scoring rule}, or the analogous \emph{Brier score} \parencite{B, GK}. The corresponding entropy is also known as \emph{Tsalis entropy}, a concept that originates in the physics literature \parencite{DawMus1}. The power scoring rules are familiar for their robustness properties both under infinitesimal contamination \parencite{BHHJ} and heavy contamination \parencite{KF}. The latter work provides some practical justification for our interest in extending PSRs to the positive orthant and beyond, as its methods rely on unnormalised statistical models.

General convex extensions of the entropy naturally appear in the context of Baysian games, where the analogous quantities to scoring rules are termed \emph{allocation rules} \parencite{FK}. In this broader context the authors introduce the notion of an \emph{affine score}, which may be visualised geometrically as a family of supporting hyperplanes to a convex function. This construction generalises the expected scores of PSRs and induces the class of functional Bregman divergences. The same structure may be found in the elicitation of expectiles, and other linear functionals of predictive densities, because the associated consistent scoring rules have the form of a Bregman divergence \parencite{AF}. Convexity plays an important role in more general elicitation problems \parencite{Steinw, Zieg, Will}.

Bregman divergences are central objects in machine learning and statistics where they serve as natural generalisations to the Euclidean metric. Their properties have been deeply studied on Euclidean spaces \parencite{Baner, BauBor, Boi}, and partial generalisations are available in the context of functional spaces \parencite{Frig}. The latter work, however, uses assumptions that are not general enough to include most of the proper scoring rules of practical interest. In contrast, here we present both notions under unified regularity conditions and demonstrate that the characterisation of Bregman divergences in the Euclidean setting \parencite{Baner} extends to the present setting. Another aspect we investigate here is the well-known fact that the generalised quadratic divergence is the only symmetric Bregman divergence on Euclidean spaces \parencite{Boi}. We find an analogue of this fact in the context of a very general class of functional Bregman divergences.

It is interesting to understand in what formal sense an entropy function defines a unique PSR. In finite dimensions, or if the entropy function allows a continuous extension to an open cone in a normed space, the question may be resolved with the standard methods of convex analysis \parencite{O}. Specifically, the entropy function has a unique subgradient at an interior point of its domain if and only if it is differentiable at that point \parencite{BVff}. In infinite dimensions, however, things get complicated due to the fact that many standard function spaces, such as the Lebesgue spaces over $\R^n$, are endowed with positive orthant that has empty interior. This implies that any extension of the positive orthant to an open cone will contain densities that change sign. The entropies of many important scoring rules, such as the logarithmic scoring rule and the proper local scoring rules of higher orders, cannot be defined for signed densities. Consequently, these entropies are not differentiable in the standard sense. It turns out that we may still resolve our question with the help of the notion of \emph{quasi-interior}. The latter notion refines the notion  of interior of a convex set in infinite dimensions when the interior is empty. For our purposes, we need the algebraic version of quasi-interior from \cite{O}, which is analogous to its better-known topological equivalents \parencite{BorLew, FullBraun}. One of our key results there is the fact that an entropy function may still have a unique subgradient on the nonempty quasi-interior of a positive cone. As an illustration, we explicitly construct a positive cone with nonempty quasi-interior where the Hyv\"arinen scorng rule is the unique 0-homogeneous subgradient of its entropy function. Here, we discuss in greater detail some of the basic properties of algebraic quasi-interior and generalise the uniqueness result to an arbitrary convex domain.

\section{The canonical extension}\label{sect: char}

The common application of unnormalised statistical models in the literature motivates us to consider the possible extensions of PSRs to positives cones. The extension of the entropy function as a sublinear function to the positive cone of the set of probabilities is referred to as canonical. This extension is universal to all entropy functions and encapsulates the condition for propriety of a scoring rule directly, as we will see below.

We begin with some standard definitions. We fix a measure space $(\Omega, \mathcal A, \mu)$ and a convex class $\mathcal P$ of probability distributions on $\Omega$ which are absolutely continuous with respect to the measure $\mu$ and represented by their probability densities.

\begin{defn}
 We call the functions $f:\Omega\To\R$ \emph{$\mathcal P$-integrable} if
\begin{equation*}
   \int_{\Omega} \abs{f(x)}p(x)d\mu(x)<\infty
\end{equation*}
for every $p\in\mathcal P$. We denote by ${\mathcal L(\mathcal P)}$ the linear space of $\mathcal P$-integrable functions.
\end{defn}

Formally, any mapping $S:\mathcal P\To\mathcal L(\mathcal P)$ is a scoring rule. Suppose that $X$ is a random variable taking values in $\Omega$ with unknown true distribution $p\in\mathcal P$. If $q\in\mathcal P$ is a predictive density for $p$, then the random variable $S(q)(X)$ assigns a numerical score to each outcome of $X$. The assumption of $\mathcal P$-integrability ensures that $S(q)(X)$ has a finite expectation,
\begin{equation*}
  p\cdot S(q) := \int_{ \Omega} S(q)(x)p(x)d\mu(x),
\end{equation*}
which is also termed the \emph{expected score} of $S$. %The following condition formalises the sense in which a scoring rule assesses the accuracy of probabilistic predictions.
Viewing scoring rules as positive incentives which a forecaster wishes to maximise in the long run, only those scoring rules which satisfy the following condition encourage honesty.

\begin{defn}\label{defn: proper}
A scoring rule $S$ that maximises its expected score at the true density,
\begin{equation}\label{eq: proper}
  p\cdot S(p)  = \max_{q\in\mathcal P} p\cdot S(q),
\end{equation}
is called \emph{proper}. If the true density is always a unique maximiser, $S$ is called \emph{strictly proper}.
\end{defn}

Related concepts are the \emph{(negative) entropy},
\begin{equation}\label{eq: Euler 1}
  \Phi(p)=p\cdot S(p),
\end{equation}
for every $p\in\mathcal P$, and the \emph{score divergence} $D:\mathcal P\times\mathcal P\To\R$ given by
\begin{equation}\label{eq: Div S}
  D(p,q) = p\cdot S(p) - p\cdot S(q).
\end{equation}
It follows immediately from Definition \ref{defn: proper} that $\Phi$ is convex, being a pointwise maximum of linear functions, and that $D$ is nonnegative. Strict propriety is equivalent to $\Phi$ being strictly convex and to $D$ being positive-definite, i.e. equal to zero only for $p=q$.

Mathematically, propriety of a scoring rule is equivalent to convexity of the associated entropy, which will be the key structural property we explore in what follows.  All subsequent results about PSRs and Bregman divergences will be presented in the unified framework of the space $\Span\mathcal P$ (the linear span of $\mathcal P$) and its dual $\mathcal L(\mathcal P)$. Notice that in finite dimensions $\Span\mathcal P$ may be identified with some Euclidean space $\R^n$, and due to the fact the latter is self-dual, $\mathcal L(\mathcal P)$ also identifies with $\R^n$. In infinite dimensions, however, self-duality holds only in special cases and in general $\mathcal L(\mathcal P)$ is more refined than the algebraic dual of $\Span\mathcal P$, but less refined than the topological dual of $\Span\mathcal P$, whenever the latter is equipped with topology. Consequentially, the linear functionals in $\mathcal L(\mathcal P)$ are generally not continuous, and we will be primarily focused on their algebraic properties.

Throughout, by $\mathcal K$ we denote a convex set such that $\mathcal P\subset \mathcal K\subset\Span \mathcal P$. Therefore, the elements of $\mathcal K$ are linear combinations of probability densities.

\begin{defn}\label{defn: subgr 1}
Given a function $\Phi:\mathcal K\To\R$ and a point $q\in\mathcal K$, we say that $q^*\in\mathcal L(\mathcal P)$ is a \emph{($\mathcal P$-integrable) subgradient} of $\Phi$ at $q$ relative to $\mathcal K$ if
\begin{equation}\label{eq: subgr 1}
  \Phi(p) \geq (p-q) \cdot q^* + \Phi(q)
\end{equation}
for all $p\in\mathcal K$. If the above inequality is strict for all $p\not=q$, the subgradient $q^*$ is called \emph{strict}.
\end{defn}

So, subgradients are linear functionals that define supporting hyperplanes to the graph of a convex function. Specifically, the set
\[
  \{(p,y)\, |\,p\in\Span \mathcal P,\, y = (p-q) \cdot q^* + \Phi(q)\}
\]
is a \emph{supporting hyperplane} to $\Phi$ at $q$. A convex function may have many subgradients at a given point. The collection of all subgradients of $\Phi$ at $q$ is called the \emph{subdifferential} of $\Phi$ at $q$ and denoted by $\partial\Phi(q)$. Suppose that $\partial\Phi(q)\not=\emptyset$ for each $q\in\mathcal K$. Then, we call a selection of subgradients $S(q)\in\mathcal \partial\Phi(q)$, for each $q\in\mathcal K$, a \emph{subgradient} of $\Phi$ on $\mathcal K$.

Definition \ref{defn: subgr 1} implies the following characterisation of PSRs due to \cite{GR}.

\begin{thm}\label{thm: GR}
A scoring rule $S:\mathcal P\To\mathcal L(P)$ is (strictly) proper if and only if there exists a pair $(\Phi,\Phi^*)$, where $\Phi:\mathcal P\To\R$ is (strictly) convex and $\Phi^*:\mathcal P\To\mathcal L(\mathcal P)$ is a subgradient of $\Phi$ relative to $\mathcal P$, such that
\begin{equation}\label{eq: GR}
  S(q)(x) = \Phi^*(q)(x) + \Phi(q) - q\cdot\Phi^*(q),
\end{equation}
for every $q\in\mathcal P$.
\end{thm}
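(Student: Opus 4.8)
The plan is to prove both directions of the equivalence, keeping track of the strict versus non-strict versions simultaneously. The key identity to understand first is that the proposed formula \eqref{eq: GR} is arranged so that the expected score takes a clean form. Indeed, I would first compute, for the candidate $S$ defined by \eqref{eq: GR}, the expected score $p\cdot S(q)$ for arbitrary $p,q\in\mathcal P$. Since $\Phi(q)$ and $q\cdot\Phi^*(q)$ are scalars (they do not depend on the integration variable $x$) and $p$ integrates to one against $\mu$, we get
\begin{equation*}
  p\cdot S(q) = p\cdot\Phi^*(q) + \Phi(q) - q\cdot\Phi^*(q) = (p-q)\cdot\Phi^*(q) + \Phi(q).
\end{equation*}
This is exactly the right-hand side of the subgradient inequality \eqref{eq: subgr 1}. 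Evaluating at $p=q$ recovers $q\cdot S(q)=\Phi(q)$, confirming that the $\Phi$ appearing in the formula is genuinely the entropy in the sense of \eqref{eq: Euler 1}.

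For the \emph{if} direction, suppose such a pair $(\Phi,\Phi^*)$ exists. Using the computation above together with the subgradient inequality \eqref{eq: subgr 1}, which states $\Phi(p)\geq(p-q)\cdot\Phi^*(q)+\Phi(q)$, I obtain $\Phi(p)\geq p\cdot S(q)$ for every $p,q\in\mathcal P$. In particular $p\cdot S(p)=\Phi(p)\geq p\cdot S(q)$, which is precisely the propriety condition \eqref{eq: proper}. If the subgradient is strict, the inequality is strict for $q\neq p$, giving strict propriety. Thus propriety of $S$ follows directly from the defining subgradient inequality.

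For the \emph{only if} direction, assume $S$ is proper. I would \emph{define} $\Phi(q):=q\cdot S(q)$ as in \eqref{eq: Euler 1} and set $\Phi^*(q):=S(q)\in\mathcal L(\mathcal P)$. The discussion following \eqref{eq: Div S} already records that propriety forces $\Phi$ to be convex (as a pointwise maximum of the linear-in-$p$ functions $p\mapsto p\cdot S(q)$) and that strict propriety corresponds to strict convexity. The substantive point is to verify that $\Phi^*(q)=S(q)$ is a subgradient of $\Phi$ relative to $\mathcal P$, i.e. that \eqref{eq: subgr 1} holds. This is immediate from propriety: for all $p,q\in\mathcal P$ we have $\Phi(p)=p\cdot S(p)\geq p\cdot S(q)=(p-q)\cdot S(q)+q\cdot S(q)=(p-q)\cdot\Phi^*(q)+\Phi(q)$, where the middle equality again uses that $q\cdot S(q)=\Phi(q)$ is a scalar and the integration against $p$ is over the same variable. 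Finally, one checks that the algebraic identity \eqref{eq: GR} is satisfied by this choice: substituting $\Phi^*(q)=S(q)$ and $\Phi(q)=q\cdot S(q)$ into the right-hand side of \eqref{eq: GR} gives $S(q)(x)+q\cdot S(q)-q\cdot S(q)=S(q)(x)$, as required.

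I do not anticipate a deep obstacle here, since the result is essentially a reformulation of the propriety inequality as a subgradient inequality. The only points requiring care are bookkeeping ones: ensuring that the scalar terms $\Phi(q)$ and $q\cdot\Phi^*(q)$ are correctly handled under the pairing $p\cdot(\,\cdot\,)$ (using that every $p\in\mathcal P$ is a probability density and so pairs with a constant to return that constant), and confirming that the constructed $\Phi^*(q)=S(q)$ genuinely lands in $\mathcal L(\mathcal P)$, which holds by the very definition of a scoring rule as a map into $\mathcal L(\mathcal P)$. The strict and non-strict cases are handled in parallel throughout by replacing each inequality with its strict counterpart.
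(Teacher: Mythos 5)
Your proof is correct and is exactly the argument the paper has in mind: the paper omits an explicit proof, noting only that the theorem follows from Definition \ref{defn: subgr 1}, and your two directions (computing $p\cdot S(q)=(p-q)\cdot\Phi^*(q)+\Phi(q)$ using $p\cdot 1=1$, then reading propriety and the subgradient inequality off each other with $\Phi(q)=q\cdot S(q)$, $\Phi^*(q)=S(q)$) are precisely that reformulation. The only cosmetic point is in the strict case of the \emph{if} direction: the hypothesis is that $\Phi$ is strictly convex, from which one notes that any subgradient of a strictly convex function is automatically strict, so your strict inequality is indeed available.
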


%The proof relies on the observation that $S(q)$ is a subgradient of $\Phi$ on $\mathcal P$ due to the fact that the term $\Phi(q) - q\cdot\Phi^*(q)$ is constant in $x\in\Omega$ and $1\cdot p=1$.
The merit of this result lies in the simplicity of its proof and the fact that it can be easily extended to arbitrary convex domains, as we will see in the next section. On the other hand, the theorem does not explain why only certain subgradients of $\Phi$ may be identified with PSRs, which means that we are still lacking a precise geometric interpretation of the condition of propriety.

Our next goal is to present such an interpretation by exploiting a beautiful connection with Euler's homogeneous function theorem. To that end, let us first review some properties related to homogeneity. For two sets $A$ and $B$ in $\Span\mathcal P$, we employ the Minkowski sum and difference notation: $A \pm B=\{a \pm b\,|\,a\in A\,, b\in B\}$. For $\lambda\in\R$ and $A\subset\Span\mathcal P$, we write $\lambda A=\{\lambda a\,|\,a\in A\}$. A set $C\subset\Span \mathcal P$ is called a \emph{convex cone} if $\lambda C=C$ and $C+C=C$ for all $\lambda>0$. Throughout, we take the conical hull of a set $C$, denoted $\Cone C$, to mean the smallest convex cone that contains $C$. Let a function $f: C\To\R$ be given, where $C$ is a convex cone. It is said that $f$ is \emph{$\alpha$-homogeneous} for some $\alpha\in\R$ if $f(\lambda q)=\lambda^\alpha f(q)$ for every $q\in C$ and every $\lambda>0$. Notice that a convex, 1-homogeneous function is a sublinear function. An extended version of \emph{Euler's homogeneous function theorem} states that if $\Phi: C\To\R$ is 1-homogeneous, then
\begin{equation}\label{eq: Euler 2}
  q\cdot\partial\Phi(q)=\Phi(q)
\end{equation}
for every $q\in C$ \parencite{HB, O}. The above identity relates sets, since $\partial\Phi(q)$ is generally a multi-valued map. It can be shown further that the subdifferential is a 0-homogeneous multi-valued map in the sense that it satisfies the relation $\partial\Phi(\lambda q)=\partial\Phi(q)$, for every $\lambda>0$ and every $q\in C$.

In view of the above, the extension of a PSR and its entropy as a 0-homogeneous and  1-homogeneous function, respectively, to $\Cone\mathcal P$ = $\{\lambda p\,|\, \lambda>0, p\in\mathcal P\}$ behaves consistently. Explicitly, given $S:\mathcal P\To\mathcal L(\mathcal P)$, we set
\begin{equation*}
  S(q) = S\left(\frac q {q\cdot 1}\right),
\end{equation*}
for every $q\in\Cone\mathcal P$, where $q\cdot 1$ is the \emph{normalising constant} of $q$. Similarly, for $\Phi:\mathcal P\To\R$, we write
\begin{equation*}
  \Phi(q) = (q\cdot 1) \Phi \left(\frac q {q\cdot 1}\right),
\end{equation*}
for every $q\in\Cone\mathcal P$. Due to \eqref{eq: Euler 2}, in the context of 1-homogeneous functions, Definition \ref{defn: subgr 1} reduces to the following.

\begin{defn}\label{defn: subgr 2}
Given a 1-homogeneous function $\Phi:\Cone \mathcal P\To\R$ and a point $q\in\Cone \mathcal P$, we say that $q^*\in\mathcal L(\mathcal P)$ is a \emph{($\mathcal P$-integrable) subgradient} of $\Phi$ at $q$ relative to $\Cone \mathcal P$ if
\begin{equation}\label{eq: subgrad}
  \Phi(p) \geq p \cdot q^*
\end{equation}
for all $p\in\Cone \mathcal P$, with equality for $p=q$. If the above inequality is strict for all $p$ not positively collinear to $q$, the subgradient $q^*$ is called \emph{strict}.
\end{defn}

Notice the special convention for a strict subgradient on $\Cone\mathcal P$ in the above definition. This notion of subgradient is coherent with the condition for propriety, which follows from the formal equivalence of Definition \ref{defn: proper} and Definition \ref{defn: subgr 2}. Thus we arrive at the classical characterisation of PSRs due to \cite{McC} and \cite{HB}. The formulation we give below emphasises the geometric nature of the result.

\begin{thm}\label{thm: HB}
Let $S:\mathcal P\To\mathcal L(P)$ be a scoring rule and $\Phi:\mathcal P\To\R$ be defined as $\Phi(p)=p\cdot S(p)$, for every $p\in\mathcal P$. Then $S$ is (strictly) proper if and only if the 0-homogeneous extension of $S$ to $\Cone \mathcal P$ is a (strict) subgradient of the 1-homogeneous extension of $\Phi$ to $\Cone \mathcal P$.
\end{thm}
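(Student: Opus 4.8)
The plan is to establish directly the formal equivalence between Definition \ref{defn: proper} and Definition \ref{defn: subgr 2}, since the theorem is merely a geometric restatement of that equivalence. Because every step in the translation between the two conditions is a reversible implication, carrying out the reduction in a single sweep will yield both directions of the ``if and only if'' at once, together with the strict and non-strict versions in parallel.

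First I would record the equality case demanded of a subgradient on $\Cone\mathcal P$. For any $q\in\Cone\mathcal P$ write $\bar q = q/(q\cdot 1)$, so that $q = (q\cdot 1)\bar q$ with $\bar q\in\mathcal P$ and $q\cdot 1 > 0$. By the defining homogeneity relations for the extensions one has $\Phi(q) = (q\cdot 1)\,\bar q\cdot S(\bar q)$ and, since $S(q) = S(\bar q)$, also $q\cdot S(q) = (q\cdot 1)\,\bar q\cdot S(\bar q)$. Hence $\Phi(q) = q\cdot S(q)$ for every $q\in\Cone\mathcal P$, which is exactly the equality at $p=q$ required by Definition \ref{defn: subgr 2} and is nothing but Euler's identity \eqref{eq: Euler 2}.

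Next I would treat the inequality. Fix $p,q\in\Cone\mathcal P$ and set $\bar p = p/(p\cdot 1)$, $\bar q = q/(q\cdot 1)$. Substituting the homogeneity formulas, the cone inequality $\Phi(p)\geq p\cdot S(q)$ becomes $(p\cdot 1)\,\bar p\cdot S(\bar p)\geq (p\cdot 1)\,\bar p\cdot S(\bar q)$. Since $p\cdot 1 > 0$ we may cancel this factor, so the inequality is equivalent to $\bar p\cdot S(\bar p)\geq \bar p\cdot S(\bar q)$. Requiring this for all $\bar p,\bar q\in\mathcal P$ is precisely the statement that $S$ maximises its expected score at the true density, i.e. that $S$ is proper. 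As the reduction is a genuine equivalence, $S$ being proper is equivalent to the $0$-homogeneous extension of $S$ being a subgradient of the $1$-homogeneous extension of $\Phi$ relative to $\Cone\mathcal P$.

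Finally, for the strict versions I would match the collinearity convention with distinctness of normalisations: two elements $p,q\in\Cone\mathcal P$ are positively collinear exactly when $\bar p = \bar q$, in which case $S(q) = S(\bar q) = S(\bar p) = S(p)$ forces $p\cdot S(q) = \Phi(p)$, consistent with the convention that equality may hold along a ray. For $p$ not positively collinear to $q$, that is $\bar p\neq\bar q$, the strict cone inequality $\Phi(p) > p\cdot S(q)$ reduces after cancellation to $\bar p\cdot S(\bar p) > \bar p\cdot S(\bar q)$, which is exactly the assertion that the true density is the unique maximiser, i.e. strict propriety. The step requiring the most care is this last bookkeeping: verifying that the special strictness convention of Definition \ref{defn: subgr 2}, which demands strict inequality only for non-collinear points, corresponds precisely to uniqueness of the maximiser on $\mathcal P$, and noting that it is the positivity of the normalising constants $p\cdot 1$ that legitimises cancelling them throughout both the strict and non-strict arguments.
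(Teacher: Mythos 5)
Your proposal is correct and follows essentially the same route as the paper, which simply asserts that the theorem is the formal equivalence of Definition \ref{defn: proper} and Definition \ref{defn: subgr 2}; you have merely written out that equivalence explicitly (Euler's identity for the equality case, cancellation of the positive normalising constant $p\cdot 1$ for the inequality, and the matching of positive collinearity with $\bar p=\bar q$ for the strict case). Nothing further is needed.
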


See also \cite{Will} who characterises PSRs by making use of the duality theory of convex functions. We now proceed to compare the two notions of subgradient employed in Theorem \ref{thm: GR} and Theorem \ref{thm: HB}, respectively. We first would like to show that if $\Phi^*:\mathcal P\To\mathcal L(P)$  is a subgradient of a convex function $\Phi$ on $\mathcal P$, then $S$ in Theorem \ref{thm: GR} extends to a subgradient of $\Phi$ on the positive cone of $\mathcal P$.

\begin{cor}\label{cor: equiv}
Consider a (strictly) convex function $\Phi:\mathcal P\To\R$ that has a subgradient $\Phi^*:\mathcal P\To\mathcal L(P)$ on $\mathcal P$. Then
\begin{equation*}
  S(q)(x) = \Phi^*(q)(x) + \Phi(q) - q\cdot\Phi^*(q)
\end{equation*}
is also a (strict) subgradient of $\Phi$ on $\mathcal P$. Moreover, the 0-homogeneous extension of $S$ is a (strict) subgradient of the 1-homogeneous extension of $\Phi$ on $\Cone \mathcal P$.
\end{cor}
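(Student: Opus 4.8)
The plan is to verify both assertions directly against the subgradient inequalities of Definition \ref{defn: subgr 1} and Definition \ref{defn: subgr 2}. The decisive observation is that $S(q)$ differs from the given subgradient $\Phi^*(q)$ only by the additive scalar $c(q):=\Phi(q)-q\cdot\Phi^*(q)$, that is, $S(q)=\Phi^*(q)+c(q)\,1$, where $1$ denotes the constant unit function. Since $1\in\mathcal L(\mathcal P)$ and $\Phi^*(q)\in\mathcal L(\mathcal P)$, each $S(q)$ is again $\mathcal P$-integrable. Moreover, this particular choice of $c(q)$ is precisely what enforces the Euler-type normalisation $q\cdot S(q)=\Phi(q)$, and this relation is what drives the second assertion.

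For the first assertion I would compute the affine term appearing in \eqref{eq: subgr 1} for $S(q)$ and show that it coincides with the one for $\Phi^*(q)$. Expanding gives $(p-q)\cdot S(q)=(p-q)\cdot\Phi^*(q)+c(q)\,(p\cdot 1-q\cdot 1)$. Since $p$ and $q$ are probability densities, $p\cdot 1=q\cdot 1=1$, so the constant contribution cancels and $(p-q)\cdot S(q)=(p-q)\cdot\Phi^*(q)$ for every $p\in\mathcal P$. Consequently the inequality $\Phi(p)\geq(p-q)\cdot S(q)+\Phi(q)$ demanded of $S$ is verbatim the inequality already satisfied by $\Phi^*$, and the strict version transfers identically. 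This proves that $S$ is a (strict) subgradient of $\Phi$ on $\mathcal P$.

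For the second assertion I would first record that, substituting $c(q)$ and using $q\cdot 1=1$, one has $q\cdot S(q)=q\cdot\Phi^*(q)+\Phi(q)-q\cdot\Phi^*(q)=\Phi(q)$ on $\mathcal P$. Writing an arbitrary point of $\Cone\mathcal P$ as $p=\lambda_p p_0$ with $p_0=p/(p\cdot 1)\in\mathcal P$ and $\lambda_p=p\cdot 1>0$, and similarly $q=\lambda_q q_0$, the $0$-homogeneous extension gives $S(q)=S(q_0)$ and the $1$-homogeneous extension gives $\Phi(p)=\lambda_p\Phi(p_0)$. Then
\begin{equation*}
  p\cdot S(q)=\lambda_p\,\big(p_0\cdot S(q_0)\big)=\lambda_p\big[(p_0-q_0)\cdot\Phi^*(q_0)+\Phi(q_0)\big]\leq\lambda_p\,\Phi(p_0)=\Phi(p),
\end{equation*}
where the middle equality expands $S(q_0)=\Phi^*(q_0)+c(q_0)\,1$ and uses $p_0\cdot 1=1$, while the inequality is the subgradient property of $\Phi^*$ at $q_0\in\mathcal P$. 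This is exactly the inequality \eqref{eq: subgrad}.

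It remains to match the equality and strictness conventions of Definition \ref{defn: subgr 2}. When $p$ is positively collinear with $q$ we have $p_0=q_0$, the bracketed term collapses to $\Phi(q_0)$, and the displayed chain becomes an equality; in particular $q\cdot S(q)=\Phi(q)$ on $\Cone\mathcal P$, which supplies the required equality at $p=q$. In the strict case, $\Phi^*$ being a strict subgradient yields $\Phi(p_0)>(p_0-q_0)\cdot\Phi^*(q_0)+\Phi(q_0)$ whenever $p_0\neq q_0$, that is, whenever $p$ is not positively collinear with $q$, so the inequality is strict on precisely the prescribed set. I do not expect a genuine obstacle in this argument; the only real care lies in the bookkeeping of the normalising constants $\lambda_p,\lambda_q$ and in faithfully matching the positive-collinearity convention built into Definition \ref{defn: subgr 2}.
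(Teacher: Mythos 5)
Your proof is correct and follows essentially the same route as the paper's: the paper likewise verifies the cone statement by the direct computation $p\cdot S(q)=(p\cdot 1)\bigl[(p_0-q_0)\cdot\Phi^*(q_0)+\Phi(q_0)\bigr]\leq(p\cdot 1)\Phi(p_0)$ together with the normalisation $q\cdot S(q)=(q\cdot 1)\Phi(q_0)$, with your $\lambda_p,\lambda_q$ bookkeeping matching its $p/(p\cdot 1)$, $q/(q\cdot 1)$ notation. Your cancellation argument for the first assertion (the constant $c(q)$ drops out of $(p-q)\cdot S(q)$ because $p\cdot 1=q\cdot 1=1$) is the same observation the paper compresses into its appeal to Theorem \ref{thm: GR}.
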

\begin{proof}
The proof follows immediately from Theorem \ref{thm: HB} and the fact that $S$ is a PSR due to Theorem \ref{thm: GR}. However, it would be instructive to show the claim independently. The 0-homogeneous extension of $S$ is given by
\begin{equation}
  S(q)(x) =  \Phi^*\left(\frac q {q\cdot 1}\right)(x) + \Phi\left(\frac q {q\cdot 1}\right) - \frac q {q\cdot 1} \cdot\Phi^*\left(\frac q {q\cdot 1}\right).
\end{equation}
Clearly,
\[
  q\cdot S(q)= (q\cdot 1) \Phi\left(\frac q {q\cdot 1}\right),
\]
for any $q\in\Cone \mathcal P$, as desired. We also have,
\begin{align*}
  p\cdot S(q) &= p\cdot   \Phi^*\left(\frac q {q\cdot 1}\right)(x) + (p\cdot 1) \left(\Phi\left(\frac q {q\cdot 1}\right) - \frac q {q\cdot 1} \cdot\Phi^*\left(\frac q {q\cdot 1}\right)\right)\\
              &\leq \left(\left(\frac  p {p\cdot 1}- \frac q {q\cdot 1}\right)\cdot \Phi^*\left(\frac q {q\cdot 1}\right) + \Phi\left(\frac q {q\cdot 1}\right)\right)(p\cdot 1)\\
              &\leq (p\cdot 1)\Phi\left(\frac p {p\cdot 1}\right),
\end{align*}
for any $p,q\in\Cone \mathcal P$, as desired.
\end{proof}

Another useful consequence of the above characterisations is the following.

\begin{cor}\label{cor: 2}
Let $\Phi:\mathcal P\To\R$ be a (strictly) convex function  that has a subgradient $\Phi^*:\mathcal P\To\mathcal L(P)$ on $\mathcal P$. Then $\Phi^*$ is a (strictly) PSR associated with  $\Phi$ if and only if $q\cdot\Phi^*(q)=\Phi(q)$, for every $q\in\mathcal P$.
\end{cor}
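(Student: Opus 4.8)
The plan is to prove both implications directly from the subgradient inequality \eqref{eq: subgr 1}, using the fact that the entropy of any scoring rule $S$ is, by definition \eqref{eq: Euler 1}, the map $q\mapsto q\cdot S(q)$. Since $\Phi^*$ is here being regarded as a candidate scoring rule in its own right, its entropy is automatically the map $q\mapsto q\cdot\Phi^*(q)$, and the phrase ``associated with $\Phi$'' is precisely the requirement that this entropy coincide with the given convex function $\Phi$. Keeping this reading in mind is the one point that needs care, because a subgradient of $\Phi$ may well be proper while carrying an entropy that differs from $\Phi$ by an additive constant; the identity $q\cdot\Phi^*(q)=\Phi(q)$ is exactly what pins the constant to zero.

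For the forward implication there is essentially nothing to do. If $\Phi^*$ is a (strictly) PSR associated with $\Phi$, then its entropy equals $\Phi$ by hypothesis, and since that entropy is $q\mapsto q\cdot\Phi^*(q)$, this is nothing other than the asserted identity $q\cdot\Phi^*(q)=\Phi(q)$ for every $q\in\mathcal P$. The whole content of the corollary therefore sits in the converse.

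For the converse I would assume the identity $q\cdot\Phi^*(q)=\Phi(q)$ and deduce propriety. One starts from $\Phi(p)\geq(p-q)\cdot\Phi^*(q)+\Phi(q)$, which holds for all $p,q\in\mathcal P$ because $\Phi^*$ is a subgradient of $\Phi$ relative to $\mathcal P$. Substituting $\Phi(q)=q\cdot\Phi^*(q)$ into the right-hand side collapses the affine correction term and leaves $\Phi(p)\geq p\cdot\Phi^*(q)$, with equality when $p=q$ by the same identity. Since $\Phi(p)=p\cdot\Phi^*(p)$, this reads $p\cdot\Phi^*(p)\geq p\cdot\Phi^*(q)$ for every $q$, which is exactly the propriety condition of Definition \ref{defn: proper}; the entropy of $\Phi^*$ is $\Phi$ by construction, so $\Phi^*$ is a PSR associated with $\Phi$. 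In the strict case the subgradient inequality is strict for $p\neq q$, the maximiser is unique, and $\Phi^*$ is strictly proper.

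A more structural alternative uses the homogeneous picture: the identity $q\cdot\Phi^*(q)=\Phi(q)$ is precisely Euler's relation \eqref{eq: Euler 2} evaluated on $\mathcal P$. From it one checks, by the scaling computation already carried out in Corollary \ref{cor: equiv}, that the $0$-homogeneous extension of $\Phi^*$ is a subgradient of the $1$-homogeneous extension of $\Phi$ on $\Cone\mathcal P$, whence $\Phi^*$ is a PSR by Theorem \ref{thm: HB}; conversely Theorem \ref{thm: HB} together with \eqref{eq: Euler 2} returns the identity. I expect no genuine obstacle here: the analytic core is the single substitution that cancels the correction term $\Phi(q)-q\cdot\Phi^*(q)$ appearing in Theorem \ref{thm: GR}, and the only things to watch are the strict-case bookkeeping and the correct interpretation of ``associated with $\Phi$''.
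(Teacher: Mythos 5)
Your argument is correct and is essentially the paper's own proof, just unpacked: the paper's one-line justification (``follows from the hypothesis and \eqref{eq: GR}'') is precisely your observation that the identity $q\cdot\Phi^*(q)=\Phi(q)$ cancels the correction term in \eqref{eq: GR} so that $S=\Phi^*$, while the forward direction is the definition of entropy \eqref{eq: Euler 1}. Your direct verification of propriety from the subgradient inequality, and the strict-case bookkeeping, are fine.
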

\begin{proof}
The proof follows directly from the hypothesis and \eqref{eq: GR}.
\end{proof}

In the following example, we illustrate how the two theorems may be applied effectively to generate  PSRs from convex functions. The reader may compare our methods of deriving PSRs with those of \cite{D}.

\begin{exmpl}\label{exmpl: char}
Let $\mathcal P$ denote the set of probability densities in the Lebesgue space $L^2(\Omega,\mu)$. We consider the \emph{quadratic entropy} $\Phi(q) = q\cdot q$ on $\mathcal P$ and wish to find a PSR associated with $\Phi$. It is sufficient to find any subgradient of $\Phi$ on $\mathcal P$. The easiest way of doing so is to extend $\Phi$ as the quadratic function on $\Span \mathcal P = L^2(\Omega,\mu)$ and make use of the fact that the extended entropy is differentiable. Its functional derivative is given by
\begin{equation*}
  \Phi^*(q)=2q,
\end{equation*}
which is also a subgradient of $\Phi$ on $L^2(\Omega,\mu)$, and in particular on $\mathcal P$. However, $\Phi^*$ is not a PSR associated with $\Phi$ as $q\cdot\Phi^*(q)\not=\Phi(q)$. For that reason, we apply Theorem \ref{thm: GR} to find that
\begin{equation*}
  S(q) = \Phi^*(q) + \Phi(q) - q\cdot \Phi^*(q) = 2q - q\cdot q
\end{equation*}
is a PSR associated with $\Phi$. This scoring rule is known as the \emph{quadratic scoring rule}.

On the other hand, let us next consider the \emph{spherical entropy} on $\mathcal P$, defined as $\Phi(q) = (q\cdot q)^{1/2}$, and also find a PSR associated with  it. Notice now that $\Phi$ has a natural extension to $\Span\mathcal P$ as the $L^2$-norm, which is a sublinear function. Using the fact that $\Phi$ is a composition of the functions $x\To x^{1/2}$ and $q\To q\cdot q$, we find that its functional derivative on $\Span\mathcal P$ is given by
\begin{equation*}
  \Phi^*(q)= \frac q {(q\cdot q)^{1/2}}.
\end{equation*}
In the light of either Theorem \ref{thm: HB} or Corollary \ref{cor: 2}, $\Phi^*$ is a PSR associated with  $\Phi$. This scoring rule is known as the \emph{spherical scoring rule}.
\end{exmpl}

\section{General convex extensions}\label{sect: conv}

In certain situations, we need to consider more general convex extensions of the entropy function to and beyond the positive cone. For example,  as we saw in Example \ref{exmpl: char}, the simplest convex extension of quadratic entropy to the whole space is the quadratic function $\Phi(q)=q\cdot q$,  while the 1-homogeneous extension of $\Phi$ to $\Cone\mathcal P$,  $\tilde \Phi(q)=q\cdot q/q\cdot1$, cannot be defined for signed densities for which $q\cdot1=0$.

\begin{comment}
The associated \emph{quadratic divergence},
\[
   \int_\Omega (p(x)-q(x))^2d\mu(x),
\]
is defined on $\Span \mathcal P = L^2(\Omega,\mu)$,
\end{comment}

We recall that by $\mathcal K$ we denote a convex set such that $\mathcal P\subset \mathcal K\subset\Span \mathcal P$.

\begin{defn}
Suppose that $\Phi:\mathcal K\To\R$ has a subgradient $\Phi^*:\mathcal K\To\mathcal L(\mathcal P)$ on $\mathcal K$. The \emph{functional Bregman divergence} on $\mathcal K$ associated with the pair $(\Phi,\Phi^*)$ is the function $D_{(\Phi,\Phi^*)}:\mathcal K\times\mathcal K\To\R$ given by
\begin{equation}\label{eq: Div Breg}
  D_{(\Phi,\Phi^*)}(p,q) = \Phi(p) - (p-q) \cdot \Phi^*(q) - \Phi(q),
\end{equation}
for all $p,q\in\mathcal K$.
\end{defn}

We note that $D$ is always nonnegative, while $D$ is positive-definite if and only if $\Phi$ is strictly convex. Notice that if
\[
  S(q)(x) = \Phi^*(q)(x)  + \Phi(q) - q\cdot\Phi^*(q)
\]
is a PSR on $\mathcal P$, then
\begin{equation*}
  p\cdot S(p) - p\cdot S(q)  = D_{(\Phi,\Phi^*)}(p,q)
\end{equation*}
is a Bregman divergence on $\mathcal P$. Hence, score divergences are Bregman divergences for probability densities.

On the extended domain $\mathcal K$, the Bregman divergence is defined as the vertical distance between $\Phi$ and the supporting hyperplanes to $\Phi$ generated by $\Phi^*$. Consider the function $s:\mathcal K\times\mathcal K\To\R$ given by
\begin{equation*}
  s(p,q) = (p-q)\cdot\Phi^*(q) + \Phi(q),
\end{equation*}
for all $p,q\in\mathcal K$, which allows us to write \eqref{eq: Div Breg} simply as
\begin{equation*}
  D_{(\Phi,\Phi^*)}(p,q) = s(p,p)-s(p,q)
\end{equation*}
for all $p,q\in\mathcal K$. Notice that for each $q\in\mathcal K$, $s(\cdot,q)$ is an affine functional on $\Span\mathcal P$.

In order to present the following definition,  we denote by $\mathcal A(\mathcal P)$ the vector space of affine functionals $A$ on $\Span\mathcal P$ of the form $A(p) = p\cdot f + \alpha$, where $f\in \mathcal L(\mathcal P)$ and $\alpha\in\R$ is a constant.

\begin{defn}
Any mapping $S: \mathcal K\To\mathcal A(\mathcal P)$ is said to be an \emph{affine scoring rule} on $\mathcal K$. The associated function $s: \Span \mathcal P\times\mathcal K\To\R$, defined as $s(p,q) = S(q)( p)$, is the \emph{score function} of $S$. The rule $S$ is said to be (strictly) proper if its score function $s$ (strictly) satisfies
\[
  s(p,q)\leq s(p,p)
\]
for all $p,q\in\mathcal K$.
\end{defn}

The following characterisation of proper affine scoring rules is due to \cite{FK}, who refer to affine scoring rules as \emph{affine scores}.

\begin{thm}\label{thm: extend score}
An affine scoring rule $S: \mathcal K\To\mathcal A(\mathcal P)$ is (strictly) proper if and only if there is a (strictly) convex function $\Phi:\mathcal K\To\R$ and a subgradient $\Phi^*:\mathcal K\To\mathcal L(\mathcal P)$ of $\Phi$ on $\mathcal K$ such that
\begin{equation}\label{eq: extend s}
  s(p,q) = (p-q)\cdot\Phi^*(q) + \Phi(q)
\end{equation}
for all $p,q\in\mathcal K$.
\end{thm}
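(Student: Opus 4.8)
The plan is to prove both implications by translating the propriety inequality $s(p,q)\le s(p,p)$ directly into the subgradient inequality of Definition \ref{defn: subgr 1}, taking $\Phi(q):=s(q,q)$ as the entropy; once the right candidate for $\Phi^*$ is identified, the argument becomes pure bookkeeping.

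For sufficiency I would start from \eqref{eq: extend s}. Fixing $q$, I would rewrite $s(p,q)=p\cdot\Phi^*(q)+\big(\Phi(q)-q\cdot\Phi^*(q)\big)$, which is affine in $p$ with linear part $\Phi^*(q)\in\mathcal L(\mathcal P)$, so that $S(q)\in\mathcal A(\mathcal P)$ and $S$ is genuinely an affine scoring rule. Putting $p=q$ gives $s(q,q)=\Phi(q)$, and the subgradient inequality \eqref{eq: subgr 1} for $\Phi^*(q)$ is precisely $\Phi(p)\ge(p-q)\cdot\Phi^*(q)+\Phi(q)=s(p,q)$, i.e. $s(p,p)\ge s(p,q)$; a strict subgradient yields strict propriety.

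For necessity I would use that $S(q)\in\mathcal A(\mathcal P)$ to write $s(p,q)=p\cdot f_q+\alpha_q$ with $f_q\in\mathcal L(\mathcal P)$, $\alpha_q\in\R$, and then set $\Phi(q):=s(q,q)$ and $\Phi^*(q):=f_q$. Since $\alpha_q=s(q,q)-q\cdot f_q=\Phi(q)-q\cdot\Phi^*(q)$, substitution immediately recovers \eqref{eq: extend s}. The subgradient property would then be read off from propriety, $\Phi(p)=s(p,p)\ge s(p,q)=(p-q)\cdot\Phi^*(q)+\Phi(q)$, which is exactly \eqref{eq: subgr 1}; in the strict case it is strict for $p\ne q$. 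To obtain convexity I would invoke the existence of a subgradient at every point of the convex set $\mathcal K$: applying the subgradient inequality at $q=\lambda p_1+(1-\lambda)p_0$ to the points $p_0$ and $p_1$ and forming the convex combination collapses the linear terms and leaves $\lambda\Phi(p_1)+(1-\lambda)\Phi(p_0)\ge\Phi(q)$, with strict inequality when $p_0\ne p_1$ if the subgradient is strict.

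I do not expect a genuine analytic obstacle here; the whole content is the single observation that choosing $\Phi(q)=s(q,q)$ converts propriety into the defining subgradient inequality. The only place needing mild care will be the strict statement, where I must confirm that a strict subgradient at every point forces strict convexity (and conversely); the convex-combination argument above handles this because $q=\lambda p_1+(1-\lambda)p_0$ is distinct from both $p_0$ and $p_1$ whenever $p_0\ne p_1$.
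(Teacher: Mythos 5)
Your proof is correct and is exactly the standard argument: the paper itself states this theorem without proof (attributing it to Frongillo and Kash and noting that the proof of Theorem \ref{thm: GR} ``can be easily extended to arbitrary convex domains''), and your identification $\Phi(q)=s(q,q)$, $\Phi^*(q)=f_q$, turning propriety into the subgradient inequality \eqref{eq: subgr 1}, is precisely that intended extension. The care you take with the strict case (strict subgradients at every point $\Leftrightarrow$ strict convexity, via the convex-combination/midpoint argument) is the right and only delicate point, and you handle it correctly.
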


%The proof relies on the fact that a proper score function is geometrically a family of supporting hyperplanes to a convex function. In the context of probabilities a proper score function is the expected score of a PSR. Moreover, the associated function $\Phi(p)=s(p,p)$ is convex and thus an extended entropy function.

Let us now describe the important special case where an affine scoring rule is in fact linear and may be identified with a family of subgradients of a convex function. To that end, let $\mathcal C$ denote a convex cone such that $\mathcal P\subset\mathcal C\subset\Span \mathcal P$.

\begin{cor}\label{cor: extend score}
Let $S: \mathcal C\To\mathcal A(\mathcal P)$ be a proper affine scoring rule and let $\Phi:\mathcal C\To\R$, $\Phi(p)=s(p,p)$, be the associated extended entropy. Then, $S$ is a linear map if and only if $\Phi$ is 1-homogeneous on $\mathcal C$.
\end{cor}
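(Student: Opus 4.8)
The plan is to unwind the definition of linearity through the representation of the score function provided by Theorem \ref{thm: extend score}, and then to reduce each direction to a short argument with the subgradient inequality. Since $S$ is a proper affine scoring rule on the convex cone $\mathcal{C}$, Theorem \ref{thm: extend score} supplies a convex $\Phi:\mathcal{C}\To\R$ and a subgradient $\Phi^*:\mathcal{C}\To\mathcal{L}(\mathcal{P})$ of $\Phi$ on $\mathcal{C}$ with $s(p,q) = (p-q)\cdot\Phi^*(q) + \Phi(q)$; moreover this $\Phi$ agrees with the extended entropy $\Phi(p)=s(p,p)$ of the statement, since $s(p,p)=\Phi(p)$. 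Rearranging gives $S(q)(p) = s(p,q) = p\cdot\Phi^*(q) + \big(\Phi(q) - q\cdot\Phi^*(q)\big)$, so as an element of $\mathcal{A}(\mathcal{P})$ the functional $S(q)$ is linear exactly when its constant term vanishes. Thus the first step is the reformulation: $S$ is a linear map if and only if $\Phi(q) = q\cdot\Phi^*(q)$ for every $q\in\mathcal{C}$.

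For the direction assuming $\Phi$ is 1-homogeneous, I would invoke the extended Euler identity \eqref{eq: Euler 2}: since $\Phi$ is 1-homogeneous on the cone $\mathcal{C}$, we have $q\cdot\partial\Phi(q)=\Phi(q)$, and because $\Phi^*(q)\in\partial\Phi(q)$ this forces $q\cdot\Phi^*(q)=\Phi(q)$ for every $q$. The constant term vanishes and $S(q)(p)=p\cdot\Phi^*(q)$ is linear, so this direction is essentially immediate.

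The converse is where the actual work lies. Assuming $\Phi(q)=q\cdot\Phi^*(q)$ for all $q$, the subgradient inequality $\Phi(p)\geq (p-q)\cdot\Phi^*(q)+\Phi(q)$ collapses to the homogeneous form $\Phi(p)\geq p\cdot\Phi^*(q)$ for all $p\in\mathcal{C}$, holding with equality at $p=q$. I would fix $q\in\mathcal{C}$ and $\lambda>0$, use that $\mathcal{C}$ is a cone so that $\lambda q\in\mathcal{C}$, and then run a two-sided estimate. Testing the inequality based at $q$ with $p=\lambda q$ gives $\Phi(\lambda q)\geq \lambda q\cdot\Phi^*(q)=\lambda\Phi(q)$. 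Testing the inequality based at $\lambda q$ with $p=q$, together with the equality case $\Phi(\lambda q)=\lambda q\cdot\Phi^*(\lambda q)$, gives $\lambda\Phi(q)\geq\Phi(\lambda q)$. The two bounds combine to $\Phi(\lambda q)=\lambda\Phi(q)$, which is exactly 1-homogeneity on $\mathcal{C}$.

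The only delicate point to get right is the bookkeeping in the converse: one must apply the simplified subgradient inequality at two different base points, $q$ and $\lambda q$, and remember that each is tight only at its own base point, which is precisely what lets the two estimates point in opposite directions and close the sandwich. Everything else reduces to identifying the constant term of $S(q)$ and a direct appeal to \eqref{eq: Euler 2}.
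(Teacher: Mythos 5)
Your proof is correct, and it takes the natural route: reducing linearity of $S(q)$ to the vanishing of the constant term $\Phi(q)-q\cdot\Phi^*(q)$ via Theorem \ref{thm: extend score}, then using the Euler identity \eqref{eq: Euler 2} in one direction and the two-sided subgradient sandwich at base points $q$ and $\lambda q$ in the other. The paper states this corollary without proof, so there is nothing to compare against, but your argument is exactly the one the surrounding discussion (Euler's homogeneous function theorem and the cone structure of $\mathcal C$) suggests.
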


To summarise, in this section we  have  considered convex extensions of the entropy function outside the set of probabilities $\mathcal P$. Any family of supporting hyperplanes to an extended entropy function defines a proper score function, which generalises the expected score of a PSR. The construction also induces the class of functional Bregman divergences.

\section{Properties of functional Bregman divergences}\label{sect: Bregman}

Here we generalise some basic properties of Bregman divergences to the functional setting. In our first result we characterise functional Bregman divergences under the notion of subgradient that is natural for PSRs. The result extends a similar claim in \cite[Appendix A]{Baner} from the Euclidean setting.

In this section again $\mathcal K$ denotes a convex set such that $\mathcal P\subset \mathcal K\subset\Span \mathcal P$.

\begin{thm}
Let $D:\mathcal K\times\mathcal K\To\R$ be a divergence on $\mathcal K$. Then $D$ is a functional Bregman divergence on $\mathcal K$ if and only if for any $a\in\mathcal K$ the function $\Phi(p)=D(p,a)$ is (strictly) convex and $\Phi$ has a subgradient $\Phi^*:\mathcal K\To\mathcal L(\mathcal P)$ such that
\begin{equation*}
  D(p,q)=D_{(\Phi,\Phi^*)}(p,q)
\end{equation*}
for all $p,q\in\mathcal K$. %Here $D_{(\Phi,\Phi^*)}$ is the functional Bregman divergence associated with the pair $(\Phi,\Phi^*)$.
\end{thm}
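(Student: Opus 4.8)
The plan is to prove the two implications separately, observing at the outset that the reverse implication is essentially immediate: if for some (hence, under the stated hypothesis, for any) fixed $a\in\mathcal K$ the function $\Phi(p)=D(p,a)$ is (strictly) convex, admits a subgradient $\Phi^*:\mathcal K\To\mathcal L(\mathcal P)$, and satisfies $D(p,q)=D_{(\Phi,\Phi^*)}(p,q)$ for all $p,q\in\mathcal K$, then $D$ is by the very definition \eqref{eq: Div Breg} a functional Bregman divergence. All the substance therefore lies in the forward direction, whose content is that the \emph{canonical} generator obtained by freezing the second argument of $D$ reproduces $D$ for every choice of reference point $a$.

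For the forward direction I would start from a representation $D=D_{(\Psi,\Psi^*)}$, guaranteed by the hypothesis that $D$ is a functional Bregman divergence, where $\Psi:\mathcal K\To\R$ is (strictly) convex and $\Psi^*$ is a (strict) subgradient of $\Psi$ on $\mathcal K$. Fixing $a\in\mathcal K$ and expanding \eqref{eq: Div Breg} gives $\Phi(p)=D(p,a)=\Psi(p)-p\cdot\Psi^*(a)+\big(a\cdot\Psi^*(a)-\Psi(a)\big)$, so that $\Phi$ differs from $\Psi$ only by an affine functional of $p$. Since adding an affine term preserves (strict) convexity, $\Phi$ is (strictly) convex, which settles the first assertion; note also that $\Phi(a)=D(a,a)=0$, consistent with the divergence property.

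It remains to exhibit the subgradient and verify the representation. The natural candidate is $\Phi^*(q)=\Psi^*(q)-\Psi^*(a)$, which lies in $\mathcal L(\mathcal P)$ because the latter is a linear space. To confirm that it is a subgradient of $\Phi$ at $q$ in the sense of Definition \ref{defn: subgr 1}, I would subtract the common affine part: the inequality $\Phi(p)\geq(p-q)\cdot\Phi^*(q)+\Phi(q)$ reduces, after cancelling the terms involving $\Psi^*(a)$, exactly to $\Psi(p)\geq(p-q)\cdot\Psi^*(q)+\Psi(q)$, which holds by the choice of $\Psi^*$; strictness transfers through this reduction, since it is merely an equality manipulation, so $\Phi^*$ is strict precisely when $\Psi^*$ is. Finally, substituting $\Phi$ and $\Phi^*$ into \eqref{eq: Div Breg} and again cancelling the $\Psi^*(a)$ contributions collapses $D_{(\Phi,\Phi^*)}(p,q)$ to $\Psi(p)-(p-q)\cdot\Psi^*(q)-\Psi(q)=D_{(\Psi,\Psi^*)}(p,q)=D(p,q)$, as required.

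The argument is almost entirely algebraic and I do not anticipate a serious obstacle; the one point requiring care is the bookkeeping around the reference point $a$ — specifically, recognising that the affine perturbation $-p\cdot\Psi^*(a)$ introduced by freezing the second slot is exactly compensated by the shift $-\Psi^*(a)$ in the subgradient, so that both the convexity and the divergence value are invariant under the choice of $a$. Keeping the strict and non-strict cases aligned throughout, and checking that $\Phi^*$ genuinely maps into $\mathcal L(\mathcal P)$, are the only places where I would slow down.
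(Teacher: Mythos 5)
Your proposal is correct and follows essentially the same route as the paper: both arguments observe that $\Phi(p)=D(p,a)$ differs from the given generator $\Psi$ only by an affine element of $\mathcal A(\mathcal P)$, hence is (strictly) convex and generates the same Bregman divergence, while the converse is immediate from the definition. The only difference is one of detail --- you make explicit the shifted subgradient $\Phi^*(q)=\Psi^*(q)-\Psi^*(a)$ and verify the cancellation, which the paper leaves implicit in the phrase that affine perturbations generate the same divergence.
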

\begin{proof}
Suppose that $D$ is a functional Bregman divergence associated with some pair $(\Phi_1,\Phi_1^*)$. Then, the function
\[
  \Phi(q)=\Phi_1(p) - p\cdot\Phi_1^*(a) + a\cdot\Phi_1^*(a) - \Phi_1(a)
\]
is (strictly) convex. Since $\Phi(q)$ and $\Phi_1(p)$ only differ by an element in $\mathcal A(\mathcal P)$, they generate the same Bregman divergence. The sufficiency part is trivial.
\end{proof}

A divergence function $D$ on $\mathcal K$ is said to be \emph{symmetric} whenever $D(p,q)=D(q,p)$ for all $p,q\in\mathcal K$. \cite{BauBor, Boi} study the symmetric Bregman divergences on the real line and on Euclidean spaces, respectively. The former authors show that the generalised (or weighted) quadratic divergence is the only symmetric divergence on the real line. We note that the proof easily extends to separable Bregman divergences. Let us recall that a functional Bregman divergence $D:\mathcal K\times\mathcal K\To\R$ is \emph{separable} if $D$ is in the form
\begin{equation*}
   D(p,q) = \int_{\Omega} D_f(p(x),q(x))d\nu(x)
\end{equation*}
for any $p,q\in\mathcal K$, where $D_f$ is a Bregman divergence on the real line induced by some convex differentiable function $f:\R\To\R$, and $\nu$ is a measure on $\Omega$ that is absolutely continuous with respect to $\mu$. In what follows, we present a generalisation of that proof to the context of a large class of non-separable Bregman divergences.

To that end, let $\Phi:\mathcal K\To\R$ be a convex function of the form
\begin{equation}\label{eq: Phi family}
 \Phi(p) =  \phi\left(\int_{\Omega} f(p(x))d\nu(x)\right),
\end{equation}
where $f$ and $\nu$ are as above, while $\phi:\R\To\R$ is an increasing function. This family includes, for example, the pseudospherical scoring rules, whose divergences are evidently non-separable. When $\phi$ is the identity, we recover the class of entropy functions that generate all separable Bregman divergences. Suppose that $\Span\mathcal P$ may be identified with a  Fr\'echet  space $\mathcal N$, and let $\mathcal K$ be an open convex set in $\mathcal N$ containing $\mathcal P$. We denote by $\mathcal N^*$ the topological dual space of $\mathcal N$, which we assume to be identifiable with a subspace of $\mathcal L(\mathcal P)$.

\begin{thm}\label{thm: sym}
Let $\Phi:\mathcal K\To\R$ be a strictly convex function of the form \eqref{eq: Phi family}. Suppose also that $\phi$ and $f$ are twice differentiable and $\Phi$ is twice Fr\'echet differentiable. If the associated functional Bregman divergence is symmetric, then $\Phi$ has the form
\begin{equation*}
  \Phi(q) = \left(\int_{\Omega} q d\nu\right)^2, \quad \text{or} \quad \Phi(q) = \int_{\Omega} q^2 d\nu,
\end{equation*}
up to affine terms $\alpha \int_{\Omega} q d\nu + \beta$, where $\alpha$ and $\beta$ are real constants.
\end{thm}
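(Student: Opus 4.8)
The plan is to mirror the classical Euclidean characterisation of symmetric Bregman divergences --- where symmetry forces a constant Hessian --- and then to exploit the rigid structure \eqref{eq: Phi family} to pin down the two admissible shapes. Throughout I identify $\Phi^*$ with the Fr\'echet derivative $\Phi'$, which is legitimate because $\Phi$ is differentiable on the open set $\mathcal K$, so its subgradient is unique.

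First I would rewrite the symmetry hypothesis. Subtracting $D_{(\Phi,\Phi^*)}(q,p)$ from $D_{(\Phi,\Phi^*)}(p,q)$ and setting the difference to zero gives
\[
  2\big(\Phi(p)-\Phi(q)\big) = (p-q)\cdot\big(\Phi^*(p)+\Phi^*(q)\big)
\]
for all $p,q\in\mathcal K$. Taking the Fr\'echet derivative in $p$ and using the symmetry of the second derivative yields $\Phi^*(p)-\Phi^*(q)=\Phi''(p)[p-q]$, and differentiating once more in $q$ forces $\Phi''(q)=\Phi''(p)$ for all $p,q$. Hence the Hessian $\Phi''$ is a constant bilinear form $B$ on $\mathcal N$; this is the functional analogue of the Euclidean fact that only generalised quadratics carry a symmetric Bregman divergence.

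Next I would compute $\Phi''$ from \eqref{eq: Phi family}. Writing $F(p)=\int_\Omega f(p)\,d\nu$ and $A_p(h)=\int_\Omega f'(p)h\,d\nu$, the chain rule gives
\[
  \Phi''(p)[h,k]=\phi''(F(p))\,A_p(h)\,A_p(k)+\phi'(F(p))\int_\Omega f''(p)\,hk\,d\nu,
\]
which must equal the constant $B(h,k)$ for every $p\in\mathcal K$. The decisive device is to test this identity on pairs $h,k$ with $\nu$-disjoint supports, so that the multiplication term vanishes and $B(h,k)=\phi''(F(p))A_p(h)A_p(k)$ becomes a rank-one form that is independent of $p$. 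This yields a dichotomy. If $f$ is affine, then $A_p(h)=a\int_\Omega h\,d\nu$ is already independent of $p$ and the multiplication term is absent, so constancy forces $\phi''$ to be constant; integrating twice gives $\Phi(q)=(\int_\Omega q\,d\nu)^2$ up to affine terms. If $f$ is not affine, I would first perturb $p$ off the supports of $h$ and $k$, which leaves $A_p(h),A_p(k)$ fixed while moving $F(p)$ through an interval and hence forces $\phi''$ to be constant along the range of $F$; I would then perturb $p$ on the support of $h$, which alters $A_p(h)$ unless $f''\equiv 0$, so the product can stay constant only if that constant value of $\phi''$ is zero. Thus $\phi$ is affine, and the surviving term $\phi'\int_\Omega f''\,hk\,d\nu$ being constant forces $f''$ constant, i.e.\ $f$ quadratic, giving $\Phi(q)=\int_\Omega q^2\,d\nu$ up to affine terms. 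These are exactly the two asserted forms.

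The main obstacle is making the dichotomy rigorous in infinite dimensions under only $C^2$ regularity, i.e.\ without appealing to a third Fr\'echet derivative. The clean splitting of the Hessian into a rank-one part and a multiplication part via disjoint supports is the key idea, but care is needed to (i) justify that a rank-one-plus-multiplication form can be constant in $p$ only if one of the two parts degenerates; (ii) secure the measure-theoretic genericity needed to choose $h,k$ and perturbation directions with nonvanishing pairings $A_p(\cdot)\neq 0$, which rests on $f'\not\equiv 0$ (guaranteed since strict convexity excludes a constant $f$); and (iii) observe that $F$ maps the convex set $\mathcal K$ onto an interval, so that ``$\phi''$ constant along reachable values'' upgrades to ``$\phi$ globally quadratic or affine.'' The remaining integrations, recovering $\Phi$ from its constant second derivative and reading off the affine ambiguity $\alpha\int_\Omega q\,d\nu+\beta$, are routine.
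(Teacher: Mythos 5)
Your proposal is correct and follows essentially the same route as the paper: both derive from symmetry that the second Fr\'echet derivative is independent of the base point (the paper differentiates the symmetry identity once to get that $\Phi'$ is affine, you differentiate twice to get $\Phi''(p)=\Phi''(q)$, which is the same conclusion), and both then use the explicit rank-one-plus-multiplication structure of $\Phi''$ coming from \eqref{eq: Phi family} to force one of the two quadratic shapes. Your disjoint-support dichotomy is a reasonable and somewhat more explicit way of executing the final matching step that the paper compresses into ``since $\Phi$ must be in the form \eqref{eq: Phi family}, the claim follows.''
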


The proof is relegated to the Appendix. In view of the theorem, the only symmetric functional Bregman divergences on $\mathcal K$ induced by convex functions $\Phi$ in the form \eqref{eq: Phi family} are the following:
\begin{align*}
  D_1(p,q) &= \left(\int_{\Omega} \left(p(x)-q(x)\right)d\nu(x)\right)^2\\
  D_2(p,q) &= \int_{\Omega} (p(x)-q(x))^2d\nu(x).
\end{align*}
%or any conic combination of the two, $\alpha D_1(p,q)+\beta D_2(p,q)$, for $\alpha,\beta\geq0$.
Notice that by the Cauchy-Schwartz inequality,
\begin{equation*}
  D_1(p,q) \leq \left(\int_{\Omega} 1d\nu(x) \right)D_2(p,q),
\end{equation*}
hence the quadratic divergence $D_2$ has greater discriminatory power than $D_1$. The two divergences may be regarded as members of the class of generalised quadratic divergences, but we will not try to formalise the latter notion in infinite dimensions. Our negative result means that apart from the generalised quadratic divergences, all other Bregman divergences are nonsymmetric.

For completeness, we note that \cite{Boi} show that if $Q$ is a positive-definite matrix of dimension $n$, then the \emph{generalised quadratic divergence},
\begin{equation*}
  D(p,q) = (p-q)^t Q (p-q),
\end{equation*}
closely related to \emph{Mahalanobis distance}, is the only symmetric Bregman divergence on $\R^n$. Notice that the latter divergence is separable if and only if $Q$ is diagonal. It would be of interest to extend their method of proof to the functional setting, which will likely offer a more general result than Theorem \ref{thm: sym}.

\section{Differentiability properties of entropy functions}\label{sect: diff}

It is well-known that in finite dimensions any convex function on open domain is differentiable and has a unique subgradient everywhere except on a set of Lebesgue measure zero \parencite{Rock}. This implies that an entropy function in finite dimensions determines a unique PSR up to a negligible set. Direct generalisation of this result in infinite dimensions is difficult as there is no analogue of the Lebesgue measure in that setting. Instead, in what follows we describe the subdifferentials of entropy functions and provide sufficient conditions for unique subgradient.

We begin with the case where the extended entropy is a differentiable function with respect to the G\^ateaux derivative, which we review next. To that end, let us suppose that $\Span\mathcal P$ may be identified with a normed space $(\mathcal N, \norm\cdot)$ and denote by $\mathcal N^*$ the topological dual space of $\mathcal N$. Furthermore, let also $\mathcal N^*$ be identifiable with a subspace of $\mathcal L(\mathcal P)$. As usual, the set $\mathcal K$ is convex and $\mathcal P\subset\mathcal K\subset\Span\mathcal P$.

\begin{defn}
Suppose that the set $\mathcal K$ is open with respect to the topology of $\mathcal N$. A function $\Phi:\mathcal K\To\R$ is \emph{G\^ateaux differentiable} at a point $q\in \mathcal K$ if there is $q^*\in \mathcal N^*$ such that for every $p\in \mathcal N$, the limit
\begin{equation*}
  p \cdot q^* = \lim_{t\To0} \frac{\Phi(q+tp)-\Phi(q)}{t}
\end{equation*}
exists. The functional $q^*$ is called the \emph{G\^ateaux derivative} of $\Phi$ at $q$
\end{defn}

The G\^ateaux derivative is necessarily unique from definition. We say that $\Phi$ is differentiable on $\mathcal K$ if $\Phi$ is differentiable at every point in $\mathcal K$. The G\^ateaux derivative has a natural geometric interpretation in the context of convex functions, as shown by the following standard result from convex analysis \parencite{ArtBor, BVff, Zal}.

\begin{thm}\label{thm: Gat}
Suppose that the set $\mathcal K$ is open with respect to the topology of $\mathcal N$, and let $\Phi: \mathcal K\To\R$ be a convex and continuous function. Then, $\Phi$ is G\^ateaux differentiable on $\mathcal K$ if and only if $\Phi$ admits a unique  subgradient $\Phi^*: \mathcal K\To \mathcal N^*$ at each point in $\mathcal K$. In this case $\Phi^*$ is the G\^ateaux derivative of $\Phi$ on $\mathcal K$.
\end{thm}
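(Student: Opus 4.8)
The plan is to route everything through the one-sided directional derivative and to recast both notions in the statement in terms of it. Since $\Phi$ is convex, for each direction $p$ the difference quotient $t\mapsto(\Phi(q+tp)-\Phi(q))/t$ is nondecreasing in $t>0$ and, because $\mathcal K$ is open, is defined and bounded below for small $t$, so the right directional derivative
\[
  \Phi'(q;p):=\lim_{t\To0^+}\frac{\Phi(q+tp)-\Phi(q)}{t}
\]
exists for every $p\in\mathcal N$; it is sublinear in $p$, and the continuity of the convex function $\Phi$ on the open set $\mathcal K$ makes $\Phi$ locally Lipschitz near $q$, so that $\abs{\Phi'(q;p)}\leq L\norm p$ for some constant $L$. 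The key bridge is the equivalence that $g$ (paired with $\mathcal N$) is a subgradient of $\Phi$ at $q$ if and only if $p\cdot g\leq\Phi'(q;p)$ for all $p\in\mathcal N$: the forward implication comes from substituting $r=q+tp$ into the subgradient inequality and letting $t\To0^+$, and the reverse from the monotonicity of the difference quotient evaluated at $t=1$ with $p=r-q$.

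For the forward implication, suppose $\Phi$ is G\^ateaux differentiable at $q$ with derivative $q^*\in\mathcal N^*$, so that $\Phi'(q;p)=p\cdot q^*$ is linear. By the bridge, $q^*\in\partial\Phi(q)$. If $g\in\mathcal N^*$ is any subgradient, then $p\cdot g\leq p\cdot q^*$ for all $p$; replacing $p$ by $-p$ forces $p\cdot g=p\cdot q^*$ for all $p$, hence $g=q^*$. So the subgradient is unique and coincides with the G\^ateaux derivative.

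For the converse I would argue by contraposition. If $\Phi$ is not G\^ateaux differentiable at $q$ then, since the right limits always exist, there is a direction $p_0$ for which the two-sided limit fails, i.e.\ $-\Phi'(q;-p_0)<\Phi'(q;p_0)$ (the reverse inequality always holds, since subadditivity gives $\Phi'(q;p_0)+\Phi'(q;-p_0)\geq\Phi'(q;0)=0$). On $\Span\{p_0\}$ define two distinct linear functionals sending $p_0$ to $\Phi'(q;p_0)$ and to $-\Phi'(q;-p_0)$ respectively, and check by positive homogeneity and the displayed inequality that each is dominated by $\Phi'(q;\cdot)$ on that line. Hahn--Banach, in its sublinear-domination form, extends each to a linear functional on $\mathcal N$ still dominated by $\Phi'(q;\cdot)$; the bound $\Phi'(q;\cdot)\leq L\norm\cdot$ makes both extensions norm-bounded, hence members of $\mathcal N^*$, and the domination makes each a subgradient by the bridge. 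As the two already disagree at $p_0$, this produces two distinct subgradients in $\mathcal N^*$, contradicting uniqueness.

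I expect the converse to be the delicate part, and within it the crucial point is guaranteeing that the two functionals produced by Hahn--Banach land in the topological dual $\mathcal N^*$ rather than merely in the algebraic dual. This is exactly where continuity of $\Phi$ is indispensable: it is continuity that upgrades $\Phi'(q;\cdot)$ from a finite sublinear functional to one bounded by a multiple of $\norm\cdot$, which in turn forces the dominated extensions to be norm-continuous and hence legitimate competitors for the presumed unique $\mathcal N^*$-valued subgradient.
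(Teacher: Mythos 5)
Your argument is correct and is precisely the standard one (the directional-derivative ``max formula'' bridge plus a Hahn--Banach separation in a direction where the two one-sided derivatives disagree); the paper itself offers no proof of this theorem, simply citing it as a standard result from convex analysis, and the cited references prove it exactly along these lines. The one point worth making explicit, which you handle correctly but tersely, is that failure of G\^ateaux differentiability is \emph{equivalent} to the existence of a direction $p_0$ with $-\Phi'(q;-p_0)<\Phi'(q;p_0)$, since whenever the two-sided limits agree in every direction the resulting functional is automatically linear (sublinear and odd) and norm-bounded by the local Lipschitz constant, hence lies in $\mathcal N^*$.
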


In the light of the theorem, every convex differentiable function $\Phi:\mathcal K\To\R$ with gradient $\Phi^*:\mathcal K\To\mathcal N^*$ defines a unique collection of supporting hyperplanes to its graph. The restriction to probabilities of these hyperplanes defines the expected score of a unique PSR. We illustrate the theorem with our next example. See also \cite[Section 5]{D} for comparison.

%As the following example illustrates, this is a natural assumption in many cases.

%$S$ is the unique PSR associated with  $\Phi$ on $\mathcal P$ that has \eqref{eq: extend s} as extended score function to $\Span\mathcal P$.

\begin{exmpl}
Let $\mathcal P$ be the set of all probability densities in the Lebesgue space $\mathcal N=L^\gamma(\Omega,\mu)$, for $1<\gamma<\infty$, and consider the power entropy function,
\begin{equation*}
  \Phi_\gamma(p) = \int_\Omega p^{\gamma}(x)d\mu(x),
\end{equation*}
for $p\in L^\gamma(\Omega,\mu)$. We have that $\Span \mathcal P = \mathcal N$ and the topological dual space of $L^\gamma(\Omega,\mu)$ is $\mathcal N^*=L^{\gamma/(\gamma-1)}(\Omega,\mu)$. Clearly, $\mathcal N^*$ may be identified with a subspace of $\mathcal L(\mathcal P)$.

We proceed to compute the G\^ateaux derivative of $\Phi_\gamma$ on $L^\gamma(\Omega,\mu)$. We have
\begin{align*}
  \lim_{t\To0} \frac{\Phi_\gamma(q+tp) - \Phi_\gamma(q)}{t} & = \frac d {dt}\Bigg|_{t=0} (q_t)^\gamma \cdot 1\\
                                            & = p\cdot \gamma q^{\gamma-1}.
\end{align*}
Since $\Phi_\gamma^*(q)=\gamma q^{\gamma-1}\in \mathcal N^*$, $\Phi_\gamma^*$ is indeed the G\^ateaux derivative of $\Phi_\gamma$. Thus, the Bregman divergence on $L^\gamma(\Omega,\mu)$ associated with  $\Phi_\gamma$ is
\begin{equation*}
  D_\gamma(p,q) = p\cdot p^{\gamma-1} - (p-q)\cdot \gamma q^{\gamma-1} - q\cdot q^{\gamma-1}.
\end{equation*}
The associated score function is
\begin{equation*}
  s_\gamma(p,q) = (p-q)\cdot \gamma q^{\gamma-1} + q\cdot q^{\gamma-1},
\end{equation*}
and $\Phi_\gamma(p)=s_\gamma(p,p)$. The restriction of $s_\gamma$ to $\mathcal P$ yields the PSR
\begin{equation*}
  S_\gamma(q) = \gamma q^{\gamma-1} - (\gamma-1)q\cdot q^{\gamma-1},
\end{equation*}
the \emph{power scoring rule} with exponent $\gamma$.
\end{exmpl}

As it is well-known, the above assumptions do not apply to important entropies such as Shannon entropy and Hyv\"arinen entropy, which do not have functional derivatives. On the other hand, all entropy functions of practical interest have well-behaved directional derivatives. Before we recall the relevant definition, we note that in what follows we do not assume that $\Span\mathcal P$ is equipped with topology, and hence $\Span\mathcal P$ is a general vector space. As usual, the set $\mathcal K$ is convex and $\mathcal P\subset\mathcal K \subset \Span \mathcal P$.

\begin{defn}
The \emph{right directional derivative} of $\Phi:\mathcal K\To\R$ at $q\in\mathcal K$ along the vector $p\in\Cone(\mathcal K-q)$ is defined as the limit
\begin{equation}\label{eq: right}
  \Phi_+'(p, q) = \lim_{t\To 0^+} \frac{\Phi(q+t p)-\Phi(q)}{t},
\end{equation}
whenever it exists.
\end{defn}

%In the definition above, $\Cone(\mathcal K-q)$ gives the cone of all interior directions to $\mathcal K$ based at $q$.
Geometrically, the set $\Cone(\mathcal K-q)$ gives all non-exterior directions to the set $\mathcal K$ based at $q$. When $\Phi$ is convex, the above limit always exists in a generalised sense that includes convergence to $-\infty$. The subdifferential of $\Phi$ is characterised by the following result.

\begin{thm}\label{thm: exist}
Let $\Phi:\mathcal K\To\R$ be a convex function. Then $\Phi$ has a $\mathcal P$-integrable subgradient at a point $q\in\mathcal K$ if and only if there is $q^*\in\mathcal L(\mathcal P)$ such that
\begin{equation*}
  p\cdot q^* \leq \Phi_+'(p,q)
\end{equation*}
for all $p\in\Cone(\mathcal K-q)$.
\end{thm}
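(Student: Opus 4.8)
The plan is to reduce both implications to the standard convexity fact that, for a fixed direction $p$, the difference quotient $t\mapsto [\Phi(q+tp)-\Phi(q)]/t$ is nondecreasing on $t>0$. First I would record why this holds: for $0<s<t$ with $q+tp\in\mathcal K$, writing $q+sp$ as the convex combination $\tfrac{s}{t}(q+tp)+(1-\tfrac{s}{t})q$ and invoking convexity gives $[\Phi(q+sp)-\Phi(q)]/s\le[\Phi(q+tp)-\Phi(q)]/t$. Consequently the right directional derivative exists in the extended sense and equals the infimum
\[
  \Phi_+'(p,q)=\inf_{t>0}\frac{\Phi(q+tp)-\Phi(q)}{t}.
\]
I would also note two bookkeeping facts: since $\mathcal K-q$ is convex and contains the origin, $\Cone(\mathcal K-q)=\{\lambda(k-q)\,:\,\lambda\ge 0,\ k\in\mathcal K\}$, so for any $p$ in this cone the point $q+tp$ lies in $\mathcal K$ for all sufficiently small $t>0$ and the quotients are well defined; and $p\cdot q^*$ is a finite real for every $p\in\Span\mathcal P$ and $q^*\in\mathcal L(\mathcal P)$, by $\mathcal P$-integrability extended linearly over $\Span\mathcal P$.

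For necessity, I would assume $q^*$ is a $\mathcal P$-integrable subgradient of $\Phi$ at $q$ and fix $p\in\Cone(\mathcal K-q)$. For small $t>0$ one has $q+tp\in\mathcal K$, so applying the subgradient inequality of Definition \ref{defn: subgr 1} at the point $q+tp$ yields $\Phi(q+tp)-\Phi(q)\ge (q+tp-q)\cdot q^*=t\,(p\cdot q^*)$. Dividing by $t$ and letting $t\to 0^+$ gives $p\cdot q^*\le\Phi_+'(p,q)$; note the finite lower bound $p\cdot q^*$ simultaneously certifies that the limit is not $-\infty$.

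For sufficiency, I would take the hypothesis $p\cdot q^*\le\Phi_+'(p,q)$ for all $p\in\Cone(\mathcal K-q)$, fix an arbitrary $k\in\mathcal K$, and set $p=k-q\in\mathcal K-q\subseteq\Cone(\mathcal K-q)$. Chaining the hypothesis with the monotonicity fact evaluated at $t=1$ (where $q+p=k\in\mathcal K$) gives
\[
  (k-q)\cdot q^*=p\cdot q^*\le\Phi_+'(p,q)\le\Phi(q+p)-\Phi(q)=\Phi(k)-\Phi(q),
\]
which rearranges to $\Phi(k)\ge(k-q)\cdot q^*+\Phi(q)$. As $k\in\mathcal K$ was arbitrary, $q^*$ is a $\mathcal P$-integrable subgradient of $\Phi$ at $q$.

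I do not expect a genuine obstacle here; the result is essentially the infinitesimal reformulation of the subgradient inequality. The only points demanding care are structural rather than computational: because $\Span\mathcal P$ carries no topology, every estimate must be confined to the one-dimensional sections $t\mapsto q+tp$ lying inside $\mathcal K$ — which is exactly what the explicit description of $\Cone(\mathcal K-q)$ secures — and the directional derivative must be treated in the extended sense, with the value $-\infty$ excluded in the necessity direction by the finite lower bound $p\cdot q^*$ and in the sufficiency direction automatically by the finite upper bound $\Phi(k)-\Phi(q)$.
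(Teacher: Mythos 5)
Your proof is correct and is exactly the standard argument (monotonicity of the convex difference quotient giving $\Phi_+'(p,q)=\inf_{t>0}[\Phi(q+tp)-\Phi(q)]/t$, then reading the subgradient inequality infinitesimally for necessity and evaluating the infimum at $t=1$ for sufficiency); the paper itself omits the details, deferring to an external reference, but this is precisely the intended "minor variant" of that argument. Your attention to the extended-valued cases and to the fact that $q+tp\in\mathcal K$ for small $t>0$ when $p\in\Cone(\mathcal K-q)$ covers the only points where care is needed.
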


The proof is a minor variant of \cite[Theorem 3.1]{O}.

We next discuss the question when the subdifferential of $\Phi$ at a given point $q\in\mathcal K$ has a unique element in $\mathcal L(\mathcal P)$. First, define the set
\[
  \mathcal O(q)=\Cone(\mathcal K-q)\cap-\Cone(\mathcal K-q),
\]
which is a vector subspace of $\Span\mathcal P$. On this subspace the right directional derivative $\Phi_+'(\cdot,q)$ is always finite \parencite{BVff}. Standardly, if $\mathcal O(q)=\Span\mathcal P$, then $q$ is an (algebraically) interior point of $\mathcal K$ (relative to $\Span\mathcal P$). If $\mathcal K$ has empty interior, however, that is, $\mathcal O(q)\not=\Span\mathcal P$ for any $q\in\mathcal K$, then we may refine the notion of interior by assuming that $\mathcal O(q)$ has in a certain sense negligible complement in $\Span \mathcal P$. We proceed to formalise that sense.

To that end, let us recall that if $E$ is a subset of $\Span\mathcal P$, the set of all $f\in\mathcal L(\mathcal P)$ such that
\begin{equation*}
  p \cdot f = 0,
\end{equation*}
for all $p\in E$, is the \emph{annihilator} of $E$ in $\mathcal L(\mathcal P)$. We denote this set by $E^\perp$. Clearly, $E^\perp$ is a linear subspace of $\mathcal L(\mathcal P)$. In the case where $E^\perp=\{0\}$, we say that $E$ has \emph{trivial annihilator}.

\begin{defn}\label{defn: qint}
Any point $q\in\mathcal K$ such that $\mathcal O(q)$ has trivial annihilator in $\mathcal L(\mathcal P)$ is called an \emph{algebraically quasi-interior} point of $\mathcal K$ relative to $\Span\mathcal P$. The collection of all algebraically quasi-interior points of $\mathcal K$ is the \emph{algebraic quasi-interior} of $\mathcal K$, denoted by $\qint \mathcal K$.
\end{defn}

It is not hard to see that the algebraic quasi-interior of a convex set $\mathcal K$ coincides with the relative interior of $\mathcal K$ in finite dimensions. Similarly, if $\Span\mathcal P$ coincides with a normed space $N$ such that $N^* = \mathcal L(P)$ and $\mathcal K$ has nonempty topological interior, then the topological interior of $\mathcal K$ coincides with the algebraic quasi-interior of $\mathcal K$. If the topological interior of $\mathcal K$ is empty, however, its algebraic quasi-interior may not be empty, which reflects the fact that the spaces $\mathcal O(q)$ do not have to coincide with the whole space $\Span\mathcal P$, as long as their complements are negligible in the precise sense of Definition \ref{defn: qint}. It may also be shown by a standard argument that if $q_1\in\qint\mathcal K$ and $q_2\in\mathcal K$, then the relative interior of the line segment $[q_1,q_2]$ lies in $\qint\mathcal K$. In particular, $\qint\mathcal K$ is convex. Finally, in \cite{O} we show that it is not hard to find convex cones in $L^1(\mathcal \R^n)$ with nonempty quasi-interior that are suitable domains for standard entropy functions such as Shannon entropy and Hyv\"arinen entropy.

\begin{thm}\label{thm: uniq}
Let $\Phi:\mathcal K\To\R$ be a convex function. If $q\in \qint\mathcal K$ and there is $q^*\in\mathcal L(\mathcal P)$ such that
\begin{equation}\label{eq: Phi+ q*}
  p\cdot q^* = \Phi_+'(p,q)
\end{equation}
for all $p\in\Cone(\mathcal K-q)$, then $q^*$ is the unique $\mathcal P$-integrable subgradient of $\Phi$ at $q$ relative to $\mathcal K$.
\end{thm}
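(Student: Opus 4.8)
The plan is to reduce the statement to the characterisation of subgradients via the right directional derivative from Theorem \ref{thm: exist}, and then to exploit the defining property of the quasi-interior, namely that $\mathcal{O}(q)$ has trivial annihilator in $\mathcal{L}(\mathcal{P})$. Recall that the proof of Theorem \ref{thm: exist} in fact establishes the pointwise equivalence: a functional $r^*\in\mathcal{L}(\mathcal{P})$ is a $\mathcal{P}$-integrable subgradient of $\Phi$ at $q$ relative to $\mathcal{K}$ if and only if $p\cdot r^*\leq\Phi_+'(p,q)$ for all $p\in\Cone(\mathcal{K}-q)$. I will use this equivalence throughout.

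First I would observe that the hypothesis \eqref{eq: Phi+ q*} immediately exhibits $q^*$ as a subgradient: the equality $p\cdot q^*=\Phi_+'(p,q)$ in particular gives the inequality $p\cdot q^*\leq\Phi_+'(p,q)$ for every $p\in\Cone(\mathcal{K}-q)$, which is exactly the required condition. So existence is free, and the entire content of the theorem is uniqueness.

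Next, let $r^*$ be an arbitrary $\mathcal{P}$-integrable subgradient of $\Phi$ at $q$, so that $p\cdot r^*\leq\Phi_+'(p,q)$ for all $p\in\Cone(\mathcal{K}-q)$. The key step is to restrict attention to directions lying in $\mathcal{O}(q)=\Cone(\mathcal{K}-q)\cap-\Cone(\mathcal{K}-q)$, for which both $p$ and $-p$ are admissible. Applying the subgradient inequality for $r^*$ to $\pm p$ and substituting the equalities satisfied by $q^*$ coming from \eqref{eq: Phi+ q*}, I obtain
\[
  p\cdot r^*\leq\Phi_+'(p,q)=p\cdot q^*
  \quad\text{and}\quad
  -p\cdot r^*\leq\Phi_+'(-p,q)=-p\cdot q^*,
\]
where the displayed values are finite precisely because $\Phi_+'(\cdot,q)$ is finite on $\mathcal{O}(q)$. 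The second inequality rearranges to $p\cdot q^*\leq p\cdot r^*$, so together with the first we conclude $p\cdot(r^*-q^*)=0$ for every $p\in\mathcal{O}(q)$; that is, $r^*-q^*\in\mathcal{O}(q)^\perp$.

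Finally, since $q\in\qint\mathcal{K}$, Definition \ref{defn: qint} gives $\mathcal{O}(q)^\perp=\{0\}$, whence $r^*=q^*$. As $r^*$ was arbitrary, $q^*$ is the unique $\mathcal{P}$-integrable subgradient of $\Phi$ at $q$. I do not expect a genuine obstacle: the argument is short and structural. The one point deserving care is the finiteness of $\Phi_+'(p,q)$ for $p\in\mathcal{O}(q)$, which licenses the arithmetic with $\Phi_+'(\pm p,q)$; this is guaranteed by the remark in the text that the right directional derivative is finite on the subspace $\mathcal{O}(q)$ (and is in any case ensured a posteriori, since $q^*$ realises these values through \eqref{eq: Phi+ q*}).
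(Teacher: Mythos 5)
Your argument is correct and is essentially the proof the paper intends (the paper only remarks that the proof is similar to that of Theorem 3.2 in the cited reference): one uses the directional-derivative characterisation of subgradients to get $p\cdot(r^*-q^*)\le 0$ for all $p\in\Cone(\mathcal K-q)$, applies it to $\pm p$ on the subspace $\mathcal O(q)$ where $\Phi_+'(\cdot,q)$ is finite, and concludes $r^*-q^*\in\mathcal O(q)^\perp=\{0\}$ by the definition of quasi-interior. No gaps.
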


If the assumptions of Theorem \ref{thm: uniq} hold for any $q\in\qint\mathcal K$, then the resulting proper affine scoring rule is uniquely associated with $\Phi$ on $\qint \mathcal K$. The proof of the theorem is similar to that of \cite[Theorem 3.2]{O}. See the examples there which show that the logarithmic and  Hyv\"arinen scoring rules are the unique 0-homogeneous $\mathcal P$-integrable subgradients of their entropy functions on the (nonempty) quasi-interior of a suitably chosen positive cone. The theorem is general enough to include all PSRs of probability densities that are of practical interest.

A natural setting to apply the previous two results is the following one. Assume that $\mathcal K$ is large enough so that
\begin{equation}\label{eq: dir P}
  \mathcal P \subset \Cone(\mathcal K-q)
\end{equation}
for any $q\in\mathcal P$.  Condition \eqref{eq: dir P} states that any direction $p\in\mathcal P$ based at any $q\in\mathcal P$ is non-exterior for the set $\mathcal K$ (that is, for some $\lambda>0$, $\lambda p+q\in\mathcal K$). For example, the choice of $\mathcal K = \Cone\mathcal P$ always satisfies condition \eqref{eq: dir P}. Due to \eqref{eq: dir P}, $\Phi_+'(p, q)$ is well-defined for any $p,q\in\mathcal P$ (but may be equal to $-\infty$). If additionally $\mathcal P\subset\qint(\mathcal K)$ and $\Phi$ satisfies the assumptions of Theorem \ref{thm: uniq} for any $q\in\mathcal P$, then $\Phi$ has a unique subgradient at any point in $\mathcal P$ relative to $\mathcal K$ in the class $\mathcal L(\mathcal P)$. The resulting proper scoring rule is uniquely associated with its extended entropy with respect to the latter notion of subgradient.

\appendix

\section*{Appendix}

\begin{proof}[Proof of Theorem \ref{thm: sym}]
Let $\Phi':\mathcal N\To\R$ and $\Phi'':\mathcal N\times\mathcal N\To\R$ denote the first and second Fr\'echet derivatives of $\Phi$ on $\mathcal K$. A computation shows
\begin{align*}
  \xi \cdot \Phi'(p)   =& \phi'\left(\int_{\Omega}f(p(x)d\nu(x)\right) \int_{\Omega}f'(p(x))\xi(x)d\nu(x),\\
  (\xi,\eta)\cdot\Phi''(p)=& \phi'\left(\int_{\Omega}f(p(x))d\nu(x)\right) \int_{\Omega}f''(p(x))\xi(x)\eta(x)d\nu(x) +\\ &\phi''\left(\int_{\Omega}f(p(x))d\nu(x)\right)\int_{\Omega}f'(p(x))\xi(x)d\nu(x)\int_{\Omega}f'(p(x))\eta(x)d\nu(x).
\end{align*}
We remark that ``$\cdot$" denotes both the duality pairing with respect to $\mathcal N$ and $\mathcal N^*$, and with respect to $\Span\mathcal P$ and $\mathcal L(\mathcal P)$. This is well-justified since $f'(p)\rho$ and $f''(p)\rho \xi$ must be in $\mathcal L(\mathcal P)$ for all $p\in\mathcal K$ and all $\xi\in\Span\mathcal P$ due to the hypothesis.

Symmetry of the Bregman divergence associated with $\Phi$ means that we have the identity
\begin{equation*}
  2\Phi(p) - (p-q)\cdot\Phi'(q) = 2\Phi(q) - (q-p)\cdot\Phi'(p)
\end{equation*}
Let $p_t$ denote $p+tr$, for $t\in[0,1]$, $r\in\mathcal N$. Replace $p$ with $p_t$ above and differentiate with respect to $t$ at $t=0$ to find
\begin{equation*}
  2r\cdot\Phi'(p) - r\cdot\Phi'(q) = r\cdot\Phi'(p) -((q-p),r)\cdot\Phi''(p).
\end{equation*}
Since $r\in\mathcal N$ is arbitrary, we have
\begin{equation*}
  \Phi'(q) = (q,\cdot)\cdot\Phi''(p) + \Phi'(p)-(p,\cdot)\cdot\Phi''(p).
\end{equation*}
Fix $p$ and consider that $q$ is the only variable above. Using the explicit form of $\Phi''(p)$, we get that
\begin{equation*}
  \Phi'(q) = 2\alpha aq + 2\beta(q\cdot b) b + c,
\end{equation*}
where $\alpha,\beta\in\R$, and $a,b,c:\Omega\To\R$. In view of the fundamental theorem of calculus for Fr\'echet spaces \parencite[Theorem 3.2.2]{Ham},
\begin{equation*}
  \Phi(q) = \alpha q\cdot aq + \beta (q\cdot b)^2 + q\cdot c + \gamma,
\end{equation*}
where $\gamma$ is a constant of integration. (The latter claim my be verified directly by differentiation.) Since $\Phi$ must be in the form \eqref{eq: Phi family}, the claim follows.
\end{proof}

\printbibliography
\end{document}